\newtheorem{theorem}{Theorem}
\newtheorem{lemma}[theorem]{Lemma}
\theoremstyle{definition}
\newtheorem{question}[theorem]{Question}
\newcommand{\F}{\mathbb{F}}
\newcommand{\Z}{\textbf{Z}}
\newcommand{\m}{\textsf{m}}
\newcommand{\s}{\textsf{s}}
\newcommand{\Sz}{\mathsf{Sz}}
\newcommand{\GL}{\mathsf{GL}}
\newcommand{\PG}{\mathsf{PG}}
\newcommand{\GF}{\textrm{GF}}
\newcommand{\Pmc}{\mathcal{P}}
\newcommand{\Bmc}{\mathcal{B}}
\newcommand{\Dmc}{\mathcal{D}}
\renewcommand{\varphi}{\phi}
\renewcommand{\leq}{\leqslant}
\renewcommand{\geq}{\geqslant}
\newcommand{\imod}[1]{\allowbreak\mkern4mu({\operator@font mod}\,\,#1)}
\begin{document}

\title[Suzuki-Tits ovoid designs]{A note on two families of $2$-designs arose from Suzuki-Tits ovoid}

\author[S.H. Alavi]{Seyed Hassan Alavi}
\address{S.H. Alavi, Department of Mathematics, Faculty of Science, Bu-Ali Sina University, Hamedan, Iran}
\email{alavi.s.hassan@gmail.com (preferred)}
\email{alavi.s.hassan@basu.ac.ir}

\subjclass[2010]{05B05; 05B25; 20B25; 20D05}
\keywords{Suzuki group, Suzuki-Tits ovoid, $2$-design, automorphism group}

\maketitle%

\begin{abstract}
    In this note, we give a precise construction of one of the families of $2$-designs arose from studying  flag-transitive $2$-designs with parameters $(v,k,\lambda)$ whose replication numbers $r$ are coprime to $\lambda$. We show that for a given positive integer $q=2^{2n+1}\geq 8$, there exists a $2$-design with parameters $(q^{2}+1,q,q-1)$ and the replication number $q^{2}$ admitting the Suzuki group $\Sz(q)$ as its automorphism group. We also construct a family of $2$-designs with parameters $(q^{2}+1,q(q-1),(q-1)(q^{2}-q-1))$ and the replication number $q^{2}(q-1)$ admitting the Suzuki groups $\Sz(q)$ as their automorphism groups. 
\end{abstract}

\section{Introduction}

An \emph{ovoid} in $\PG_{3}(q)$ with $q>2$, is a set of $q^{2} + 1$ points such that no three of which are collinear. The classical example of an ovoid in $\PG_{3}(q)$ is an elliptic quadric. If $q$ is odd, then all ovoids are elliptic quadrics, see \cite{a:Barlotti-55,a:Panella-55}, while in even characteristic, there is only one known family of ovoids that are not elliptic quadrics in which $q\geq 8$ is an odd power of $2$. These were discovered by Tits \cite{a:Tits-ovoid}, and are now called the \emph{Suzuki-Tits ovoids} since the Suzuki groups naturally act on these ovoids. The main aim of this note is to introduce two infinite families of $2$-designs arose from Suzuki-Tits ovoid whose automorphism groups are the Suzuki groups. 
A $2$-design $\Dmc$ with parameters $(v,k,\lambda)$ is a pair $(\Pmc,\Bmc)$ with a set $\Pmc$ of $v$ points and a set $\Bmc$ of $b$ blocks such that each block is a $k$-subset of $\Pmc$ and each two distinct points are contained in $\lambda$ blocks. 
The number of blocks incident with a given point is a constant number $r:=bk/v$ called the \emph{replication number} of $\Dmc$. 
If $v=b$ (or equivalently, $k=r$), then $\Dmc$ is called \emph{symmetric}. 
%An automorphism group of $\Dmc$ consists of permutations on $\Pmc$ mapping blocks to blocks and preserving the incidence. If an automorphism group $G$ is transitive on the set of point-block incident pairs, then it is called flag-transitive, and $G$ is point-primitive when it is primitive on $\Pmc$. 
Further definitions and notation can be found in \cite{b:Beth-I,b:Atlas,b:Praeger2018}.

Our motivation comes from a recent  classification of flag-transitive $2$-designs whose replication numbers are coprime to $\lambda$, \cite{a:ADM-AS-CP}. Excluding thirteen sporadic examples, we have found seven possible infinite families of  $2$-designs with this property, three of which are new and the rest are well-known structures, namely, \emph{point-hyperplane designs}, \emph{Witt-Bose-Shrikhande spaces} \cite{a:BDD-1988}, \emph{Hermitian Unital spaces} \cite{a:Kantor-85-Homgeneous} and \emph{Ree Unital spaces} \cite{a:Luneburg-66}. 
These new possible $2$-designs arose from studying $2$-designs admitting finite almost simple exceptional automorphism groups of Lie type, see  \cite{a:A-Exp-CP}.  Although, we have provided examples of these new $2$-designs with smallest possible parameters \cite[Section 2]{a:A-Exp-CP}, at the time of writing \cite{a:A-Exp-CP}, we have not been aware of any generic construction of these incidence structures. In \cite{a:AD-Ree}, Daneshkhah have constructed two of these infinite families of $2$-designs admitting Ree groups as their automorphism groups, and so for the remaining possibility, one may ask the following question: 

\begin{question}\label{question}
    For a given  prime power $q=2^{2n+1}\geq 8$, does there exist a $2$-design with parameters $(q^{2}+1,q,q-1)$ with replication number $q^{2}$ admitting the Suzuki group $\Sz(q)$ as its automorphism group? 
\end{question}

In this note, we aim to give a positive answer to Question \ref{question}. Indeed, in Theorem \ref{thm:main} below, we explicitly construct such a $2$-design using the natural action of Suzuki groups on Suzuki-Tits ovoids. We would also remark here that these designs can also be constructed geometrically by taking points as ovoids in $\PG_{3}(q)$ and blocks as pointed conics minus the distinguished points. We also construct a $2$-design with parameters $(q^{2}+1,q(q-1),(q-1)(q^{2}-q-1))$ admitting the Suzuki group $\Sz(q)$ as its flag-transitive automorphism group.
We call such $2$-designs \emph{Suzuki-Tits ovoid designs} as they arose from Suzuki-Tits ovoid. We note that Suzuki-Tits ovoid designs have $q(q^{2}+1)$ number of blocks, and so they are not symmetric. In view of \cite{a:Kantor-75-2-trans,a:Kantor-85-2-trans}, the  Suzuki-Tits ovoid designs are non-symmetric $2$-designs with doubly transitive automorphism groups on points.     

\section{Preliminaries}

The Suzuki groups were discovered by Suzuki \cite{a:Suzuki-60}, and a geometric construction of these groups was given by Tits \cite{a:Tits-ovoid}. We mainly follow the description of these groups from \cite[Section XI.3]{b:Huppert1982-III} with a few exceptions in our notation, see also \cite{a:Suzuki-60,a:Suzuki-62,a:Tits-ovoid}.

Let $\F=\GF(q)$ be the finite field of size $q=2^{2n+1}\geq 8$, and let $\theta$ be the automorphism of $\F$ mapping $\alpha$ to $\alpha^{r}$, where $r =\sqrt{2q}=2^{n+1}$. Therefore, $\theta^{2}$ is the Frobenius automorphism $\alpha \mapsto \alpha^{2}$. Let  $e$ be the identity $4\times 4$ matrix, and let $e_{ij}$ be the $4\times 4$ matrix with $1$ in the entry $ij$ and $0$ elsewhere.
For $x,y \in \F$ and $\kappa\in \F^{\times}$, define 
\begin{align}
\nonumber \s(x,y)=&e+xe_{21}+ye_{31}+x^{\theta}e_{32}+\\
&(x^{1+\theta}+xy+y^{\theta}) e_{41}+(x^{1+\theta}+y)e_{42}+xe_{43};\label{s}\\
\m(\kappa)=&\kappa^{1+2^{n}}e_{11}+\kappa^{2^{n}}e_{22}+\kappa^{-2^{n}}e_{33}+\kappa^{-1-2^{n}}e_{44};\label{m}\\
\tau:=& e_{14}+e_{23}+e_{32}+e_{41}. \label{tau}
\end{align}
We know that $\s(x,y)\cdot \s(z,t) = \s(x + z, y + t+x^{\theta}z)$, and hence the set $Q$ of  the matrices $\s(x,y)$ is a group of order $q^{2}$. The set of matrices $\m(\kappa)$ forms a
cyclic group $M\cong \F^{\times}$ of order $q-1$. Since
$\m(\kappa)^{-1}\cdot \s(x, y) \cdot \m(\kappa) = (x\kappa, y\kappa^{1+\theta})$,
the group $H$ generated by $Q$ and $M$ is a semidirect product of a normal subgroup $Q$ by a complement $M$, and so it has order $q^{2}(q-1)$. The Suzuki group $\Sz(q)$ is a subgroup of $\GL_{4}(q)$ generated by $H$ and the $4\times 4$ matrix $\tau$ defined as in \eqref{tau}. In what follows, $G$ will denote the Suzuki group $\Sz(q)$ with $q=2^{2n+1}\geq 8$.

The Suzuki group $G$ naturally acts on the projective space $\PG_{3}(q)$ via $[w]^{x}:=[w^{x}]$, for all $x\in G$ and $[w]\in \PG_{3}(q)$. In fact, $G$ acts as a doubly transitive permutation group of degree $q^{2} + 1$ on the Suzuki-Tits ovoid
\begin{align}\label{points}
\Pmc= \{p(\alpha,\beta) \mid  \alpha, \beta \in \F\}\cup \{\infty\} \subseteq \PG_{3}(q),
\end{align}
where $p(\alpha,\beta)=[\alpha^{2+\theta} + \alpha\beta + \beta^{\theta} , \beta, \alpha, 1]$ and $\infty := [1,0,0,0] \in \PG_{3}(q)$. In particular, the action of the matrices $\s(x,y)$ and $\m(\kappa)$ on the projective points $p(\alpha,\beta)$ of $\Pmc\setminus \{\infty\}$ can be explicitly written as follows
\begin{align}
p(\alpha,\beta)^{\s(x,y)}&= p(\alpha+x,\beta+y+\alpha x^{\theta}+x^{1+\theta}),\label{ps}\\
p(\alpha,\beta)^{\m(\kappa)}&=p(\alpha \kappa,\beta\kappa^{1+\theta})\label{pm}. 
\end{align}

Note that $H$ fixes $\infty$, and hence $H$ is the point-stabilizer  $G_{\infty}$.  The subgroup $H$ of $G$ acts as a Frobenius group on $\Pmc\setminus\{\infty\}$, where $Q$ is the Frobenius kernel of $H$ acting regularly on $\Pmc\setminus\{\infty\}$, and $M=G_{\infty,\omega}$ is the Frobenius complement of $H$ fixing the second point $\omega := p(0,0)=[0,0,0,1] \in \Pmc$
and acting semiregularly on $\Pmc \setminus  \{\infty, \omega\}$. Therefore, the stabilizer of any three points in $\Pmc$ is the  trivial subgroup. Moreover, the map $Hg\mapsto \infty^{g}$ induces a permutational isomorphism between the $G$-action on the set of right cosets of $H$ in $G$ and the $G$-action on the Suzuki-Tits avoid $\Pmc$. 

We now consider the subgroup $Q_{0}$ of $Q$ consisting of all matrices $\s(0,y)$. Note that the matrices of the form $\s(0,y)$ are the only involutions in $Q$. Thus $Q_0$ is a normal subgroup of $Q$ of order $q$. Moreover, $Q_{0}=Q'=\Z(Q)$. Let $K$ be the subgroup of $H$ generated by $Q_{0}$ and $M$, that is to say,
\begin{align}\label{K}
K=\langle \s(0,y),\m(\kappa)\mid y\in \F \text{ and }\kappa\in \F^{\times}\rangle,
\end{align}
where $\s(0,y)$ and $\m(\kappa)$ are as in \eqref{s} and \eqref{m}, respectively.
Then $K$ is a Frobenius group of order $q(q-1)$ whose Frobenius kernel and Frobenius complement are $Q_{0}$ and $M$, respectively. Let now 
\begin{align}\label{orbits}
\Delta_{1}=\{\infty\}, \  \Delta_{2}= \{p(0,\beta) \mid  \beta \in \F\}, \text{ and } \Delta_{3}=\{p(\alpha,\beta) \mid \alpha\in\F^{\times},\, \beta \in \F\}.
\end{align} 
Then $|\Delta_{1}|=1$, $|\Delta_{2}|=q$ and $|\Delta_{3}|=q(q-1)$. Assuming these notation, we prove the following key lemma.

\begin{lemma}\label{lem:orbs}
    The subsets $\Delta_{1}$, $\Delta_{2}$ and $\Delta_{3}$ defined in \eqref{orbits} are the only orbits of $K$ in its action on the ovoid $\Pmc$. Moreover, $K$ is the setwise-stabilizer $G_{\Delta_{i}}$, for $i=2,3$.  
\end{lemma}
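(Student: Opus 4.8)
The plan is to separate the two assertions: first that $\Delta_1,\Delta_2,\Delta_3$ exhaust the $K$-orbits, and then that $K$ equals the full setwise stabiliser $G_{\Delta_i}$ for $i=2,3$. For the orbit statement I would first record that the parametrisation $(\alpha,\beta)\mapsto p(\alpha,\beta)$ is injective, since after normalising the last projective coordinate to $1$ the third and second coordinates recover $\alpha$ and $\beta$; consequently $\{\infty\}\cup\Delta_2\cup\Delta_3$ is a genuine partition of $\Pmc$. Specialising the action formulas \eqref{ps} and \eqref{pm} to the generators of $K$ gives $p(\alpha,\beta)^{\s(0,y)}=p(\alpha,\beta+y)$ and $p(\alpha,\beta)^{\m(\kappa)}=p(\alpha\kappa,\beta\kappa^{1+\theta})$, which makes each of the three sets visibly $K$-invariant. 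Transitivity is then immediate: $\infty$ is fixed; on $\Delta_2$ the kernel $Q_0$ already acts transitively through $\beta\mapsto\beta+y$; and on $\Delta_3$ one first adjusts the nonzero $\alpha$-coordinate by a suitable $\m(\kappa)$ and then the $\beta$-coordinate by a suitable $\s(0,y)$. Since these $K$-invariant sets partition $\Pmc$ and each carries a transitive $K$-action, they are precisely the $K$-orbits.

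For the stabiliser statement the crucial reduction is to prove the containment $G_{\Delta_i}\leq H=G_\infty$ for $i=2,3$; granting this, the computation finishes quickly. Every element of $H$ is uniquely of the form $\s(x,y)\m(\kappa)$ with $\s(x,y)\in Q$ and $\m(\kappa)\in M$, and composing \eqref{ps} and \eqref{pm} shows that $p(0,\beta)^{\s(x,y)\m(\kappa)}$ has $\alpha$-coordinate equal to $x\kappa$. Hence such an element preserves $\Delta_2$ if and only if $x=0$, that is, if and only if it lies in $K$. Because $H$ fixes $\infty$ and $\Delta_3=(\Pmc\setminus\{\infty\})\setminus\Delta_2$, an element of $H$ stabilises $\Delta_3$ exactly when it stabilises $\Delta_2$; so in both cases $G_{\Delta_i}\cap H=K$, and combined with $G_{\Delta_i}\leq H$ this yields $G_{\Delta_i}=K$.

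It therefore remains to establish $G_{\Delta_i}\leq H$, which I regard as the main obstacle. Since $G$ is doubly transitive on $\Pmc$, the stabiliser $G_{\Delta_i}$ is a proper subgroup containing $K$, so it suffices to show that $H$ is the unique maximal subgroup of $G$ containing $K$. Here I would invoke Suzuki's classification of the maximal subgroups of $\Sz(q)$: besides the point stabiliser $H$ of order $q^2(q-1)$, they are the dihedral group of order $2(q-1)$, the two normalisers $C_{q-\sqrt{2q}+1}\rtimes C_4$ and $C_{q+\sqrt{2q}+1}\rtimes C_4$ of the cyclic tori, and the subfield subgroups $\Sz(q_0)$. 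A direct order comparison excludes the first three as overgroups of $K$, each having order strictly less than $|K|=q(q-1)$ for $q\geq 8$, while a Sylow $2$-count excludes $\Sz(q_0)$: the $2$-part $q$ of $|K|$ exceeds the order $q_0^2$ of a Sylow $2$-subgroup of any proper subfield subgroup, because $q=2^{2n+1}$ with $2n+1$ odd forces every subfield index to be an odd prime $\geq 3$. Finally, among the conjugates of $H$ only $G_\infty$ contains $K$, since $\infty$ is the unique $K$-fixed point of $\Pmc$ by the orbit computation. Thus $H$ is the unique maximal overgroup of $K$, forcing $G_{\Delta_i}\leq H$ and hence $G_{\Delta_i}=K$ for $i=2,3$. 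For $i=2$ I would also note a self-contained geometric alternative: the $q$ points of $\Delta_2$ span the plane $\pi$ given by $X_3=0$, and $\Delta_2\cup\{\infty\}=\Pmc\cap\pi$ is the corresponding oval section of the ovoid, so any $g\in G_{\Delta_2}$ preserves $\pi$ and this oval, and fixing $\Delta_2$ forces $g$ to fix the remaining point $\infty$.
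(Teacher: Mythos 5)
Your proof is correct, and its first half coincides with the paper's: both verify that each $\Delta_i$ is $K$-invariant by specialising \eqref{ps} and \eqref{pm} to the generators of $K$, and both get transitivity by applying a suitable $\m(\kappa)$ followed by a suitable $\s(0,y)$. Where you genuinely diverge is the key reduction $G_{\Delta_i}\leq H$. The paper handles this by a generator observation --- $\tau$ sends $p(0,0)$ to $\infty$, hence does not preserve $\Delta_2$, ``hence'' $G_{\Delta_2}\leq H$ --- which, read literally, is the least airtight step of its proof (a subgroup avoiding one generator need not lie in the subgroup generated by the rest). You correctly identify this containment as the main obstacle and settle it differently: you show $H$ is the unique maximal overgroup of $K$ by running through Suzuki's list of maximal subgroups of $\Sz(q)$ (order comparisons eliminate the dihedral group and the torus normalisers, a Sylow $2$-count eliminates subfield subgroups because $2n+1$ is odd, and the uniqueness of the $K$-fixed point $\infty$ singles out $G_\infty$ among the conjugates of $H$). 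This costs you an appeal to the classification of maximal subgroups --- machinery the paper never invokes --- but it is rigorous and disposes of $i=2$ and $i=3$ at once; your complement observation $\Delta_3=(\Pmc\setminus\{\infty\})\setminus\Delta_2$ then spares you the separate matrix computation for $\Delta_3$ that the paper carries out. Your geometric alternative for $i=2$, namely that $\Delta_2\cup\{\infty\}$ is the oval section $\Pmc\cap\pi$ with $\pi$ the plane $X_3=0$, so that any element preserving $\Delta_2$ preserves the spanned plane and must fix the leftover point $\infty$, is self-contained and is arguably the cleanest way to obtain $G_{\Delta_2}\leq G_\infty$; it also matches the geometric description of the blocks (pointed conics minus the distinguished point) mentioned in the paper's introduction.
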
  
\begin{proof}
    Since $K$ is a subgroup of $H$ fixing $\infty$, it follows that $\Delta_{1}$ is an orbit of $K$. It is easily followed by \eqref{pm} that the $\Delta_{i}$, for $i\in\{2,3\}$, are $M$-invariant subsets of $\Pmc$. Moreover, by \eqref{ps}, we have that $p(\alpha,\beta)^{\s(0,y)}=p(\alpha,\beta+y)$, and so the $\Delta_{i}$ are also $Q_{0}$-invariant. Therefore, by \eqref{K}, we conclude that the $\Delta_{i}$ are $K$-invariant subsets of $\Pmc$. Further, if $\alpha\in \F^{\times}$, then it follows from  \eqref{ps} and \eqref{pm} that  $p(0,0)^{\s(0,\beta)}=p(0,\beta)$ and $p(1,0)^{\m(\alpha)\s(0,\beta)}=p(\alpha,\beta)$. This implies that $K$ is transitive on each $\Delta_{i}$, and since the set of $\Delta_{i}$, for $i=1,2,3$, forms a $K$-invariant partition of $\Pmc$, we conclude that the $\Delta_i$ are all distinct $K$-orbits on $\Pmc$. 
    
    We now prove that $G_{\Delta_{2}}=K$. Obviously, $K$ is a subgroup of $G_{\Delta_{2}}$. Recall that $G$ is generated by $\s(x,y)$, $\m(\kappa)$ and $\tau$ defined as in \eqref{s}, \ref{m} and \eqref{tau}, respectively. The fact that $p(0,0)^{\tau}=\infty$ implies that $\tau$ does not fix $\Delta_{2}$. Hence, $G_{\Delta_{2}}$ is a subgroup of $H=\langle \s(x,y),\m(\kappa) \mid  x,y\in \F \text{ and }\kappa\in \F^{\times} \rangle$. We have also proved that $K=Q_{0}M$ fix $\Delta_{2}$, and so $K\leq G_{\Delta_{2}}\leq H$. If a generator $g:=\s(x,y)$ fixes $\Delta_{2}$, then \eqref{ps} yields $p(x,\beta+y+x^{1+\theta})=p(0,\beta)^{\s(x,y)}\in \Delta_{2}$, and so  $x=0$, that is to say, $g=\s(0,y)\in Q_{0}$. Recall that $M$ fixes $\Delta_{2}$, and hence we conclude that  $G_{\Delta_{2}}=K$. We finally show that $G_{\Delta_{3}}=K$. By the same argument as in the previous case, $G_{\Delta_{3}}$ is a subgroup of $H$ containing $K$. Let $\s(x,y)\in H$ fix $\Delta_{3}$. Then $p(\alpha,\beta)^{\s(x,y)}\in \Delta_{3}$ for all $\alpha\neq 0$, and so by \eqref{ps}, this is equivalent to $x\neq \alpha$, for all $\alpha\neq 0$. Thus, $x=0$, and hence  $G_{\Delta_{3}}$ is a subgroup of  $K=Q_{0}M$ implying that $G_{\Delta_{3}}=K$.             
\end{proof}

\section{Existence of Suzuki-Tits ovoid designs}

In this section, we prove our main result Theorem~\ref{thm:main} and construct two infinite families of $2$-designs admitting $G = \Sz(q$) as their automorphism groups. In order to construct out favourite designs, we use \cite[Proposition III.4.6]{b:Beth-I}. In fact, if $G$ is a doubly transitive permutation group on a finite set $\Pmc$ of size $v$ and $B$ is a subset of $\Pmc$ of size $k\geq 2$, then the incidence structure $\Dmc=(\Pmc,B^{G})$ is a $2$-design with parameters $(v,k,\lambda)$ with automorphism group $G$, where $B^{G}=\{B^{x}\mid x\in G\}$. The design $\Dmc$ has $b=|G:G_{B}|$ number of blocks and $\lambda$ is equal to $bk(k-1)/v(v-1)$. We now prove our main result.

\begin{theorem}\label{thm:main}
    Let $G=\Sz(q)$ with $q=2^{2n+1}\geq 8$. Let also $\Pmc$ be the Suzuki-Tits ovoid defined as in \eqref{points}, and let $\Delta_{i}$ be as in \eqref{orbits}, for $i\in\{2,3\}$. Then
    \begin{enumerate}[\rm (a)]
        \item if $\Bmc_{2}=\Delta_{2}^{G}$, then $(\Pmc,\Bmc_{2}) $ is a $2$-design with parameters $(q^{2}+1,q,q-1)$ and the replication number $q^{2}$;
        \item if $\Bmc_{3}=\Delta_{3}^{G}$, then $(\Pmc,\Bmc_{3}) $ is a $2$-design with parameters $(q^{2}+1,q(q-1),(q-1)(q^{2}-q-1))$ and the replication number $q^{2}(q-1)$;
    \end{enumerate}
    Moreover, the Suzuki group  $\Sz(q)$ is a flag-transitive automorphism group of the designs in parts {\rm (a)} and {\rm (b)} acting primitively on the  points set $\Pmc$ but imprimitively on the blocks set $\Bmc_{i}$. 
\end{theorem}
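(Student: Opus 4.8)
The plan is to apply the general construction recorded above from \cite[Proposition III.4.6]{b:Beth-I}, using that $G=\Sz(q)$ acts doubly transitively on $\Pmc$ and that the relevant block stabilisers are pinned down by Lemma \ref{lem:orbs}. Since $v=|\Pmc|=q^{2}+1$ and $k=|\Delta_{i}|\geq 2$ for $i\in\{2,3\}$, each incidence structure $(\Pmc,\Delta_{i}^{G})$ is automatically a $2$-design admitting $G$ as an automorphism group; it therefore only remains to compute the numerical parameters and to verify the transitivity and primitivity assertions.

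First I would record the relevant orders: $|G|=q^{2}(q^{2}+1)(q-1)$ and, by Lemma \ref{lem:orbs}, $G_{\Delta_{i}}=K$ with $|K|=q(q-1)$ for $i\in\{2,3\}$. Hence both designs have $b=|G:K|=q(q^{2}+1)$ blocks. For part (a) one has $k=|\Delta_{2}|=q$, and substituting $v=q^{2}+1$, $b=q(q^{2}+1)$ and $k=q$ into $\lambda=bk(k-1)/(v(v-1))$ and $r=bk/v$ yields $\lambda=q-1$ and $r=q^{2}$ after cancelling the common factor $q^{2}+1$. For part (b) one has $k=|\Delta_{3}|=q(q-1)$, so $k-1=q^{2}-q-1$, and the same substitution gives $\lambda=(q-1)(q^{2}-q-1)$ and $r=q^{2}(q-1)$. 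These are routine cancellations.

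Next I would establish flag-transitivity. By construction $G$ is transitive on each block set $\Bmc_{i}=\Delta_{i}^{G}$, and by Lemma \ref{lem:orbs} the block stabiliser $K=G_{\Delta_{i}}$ acts transitively on the points of the block $\Delta_{i}$. Given two flags $(p_{1},B_{1})$ and $(p_{2},B_{2})$, block-transitivity supplies $g\in G$ with $B_{1}^{g}=B_{2}$; then $p_{1}^{g}$ and $p_{2}$ both lie in $B_{2}$, so transitivity of $G_{B_{2}}$ on the points of $B_{2}$ supplies $h\in G_{B_{2}}$ with $(p_{1}^{g})^{h}=p_{2}$, and $gh$ carries the first flag to the second. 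This is the standard argument that block-transitivity together with block stabilisers transitive on their own blocks forces flag-transitivity.

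Finally I would treat the primitivity claims. Primitivity on points is immediate, since a doubly transitive action is primitive. The action on the block set $\Bmc_{i}$ is permutationally isomorphic to the action of $G$ on the right cosets of $K=G_{\Delta_{i}}$, and such a coset action is primitive if and only if $K$ is maximal in $G$. The chain $K=Q_{0}M<QM=H<G$, in which $Q_{0}<Q$ is proper because $|Q_{0}|=q<q^{2}=|Q|$ and $H=G_{\infty}$ is a proper point stabiliser, exhibits a subgroup strictly between $K$ and $G$; hence $K$ is not maximal and the block action is imprimitive. I expect the only genuinely delicate point to be the identification of $K$ (rather than some larger subgroup) as the exact block stabiliser, and this is precisely the content of Lemma \ref{lem:orbs}, so no serious obstacle remains.
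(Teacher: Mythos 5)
Your proposal is correct and follows essentially the same route as the paper: invoke \cite[Proposition III.4.6]{b:Beth-I} for the $2$-design property, use Lemma \ref{lem:orbs} to identify the block stabiliser as $K$ and hence compute $b=q(q^{2}+1)$, $\lambda$ and $r$, and derive flag-transitivity from block-transitivity together with the transitivity of $K$ on $\Delta_{i}$. Your only addition is the explicit chain $K<H<G$ witnessing non-maximality of $K$, which the paper asserts without displaying.
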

\begin{proof}
    By the fact that $G$ is doubly transitive on $\Pmc$, \cite[Proposition III.4.6]{b:Beth-I} implies that the incidence structures $\Dmc_{i}=(\Pmc,\Bmc_{i})$ are  $2$-designs with parameters $(v,k_{i},\lambda_{i})$, for $i\in\{2,3\}$, admitting $G=\Sz(q)$ as their automorphism groups. Recall that $v=|\Pmc|=q^{2}+1$. Moreover, by Lemma \ref{lem:orbs}, we have that $|G_{\Delta_{i}}|=|K|=q(q-1)$, for $i\in\{2,3\}$, where $K$ is subgroup of $G$ defined as in \eqref{K}. Therefore,  the designs $\Dmc_{i}$ in parts (a) and (b) have (the same) number of blocks $b=|G:G_{\Delta_{i}}|=q(q^{2}+1)$. Note that $k_{i}=|\Delta_{i}|$ and   $\lambda_{i}=bk_{i}(k_{i}-1)/v(v-1)$, and the replication number $r_{i}$ is equal to $bk_{i}/v$, for $i\in\{2,3\}$. Therefore, $\Dmc_{2}$ is a $2$-design with parameters $(q^{2}+1,q,q-1)$ as in part (a) and $\Dmc_{3}$ is a $2$-design with parameters $(q^{2}+1,q(q-1),(q-1)(q^{2}-q-1))$ as in part (b). Since $G$ is transitive on $\Bmc_{i}$ and $\Delta_{i}$ is an orbit of $K=G_{\Delta_{i}}$, we conclude that $G$ is a flag-transitive automorphism group of $\Dmc_{i}$, for $i\in\{2,3\}$. The group $G$ is primitive on $\Pmc$ as it is  doubly transitive  on $\Pmc$, however, $G$ is imprimitive on $\Bmc_{i}$ as the block-stabilizer $K=G_{\Delta_{i}}$ is not a maximal subgroup of $G$.     
\end{proof}

We remark here that the design constructed in Theorem \ref{thm:main}(a) introduces an infinite family of examples of $2$-designs with $\gcd(r,\lambda)=1$ obtained in \cite[Theorem 1.1(a)]{a:A-Exp-CP} and gives a positive answer to Question \ref{question}. 

\section*{Acknowledgment}
The author would like to thank John Bamberg for pointing out the geometric construction of the designs with parameters in Question \ref{question}.  

%\section*{References}

%\bibliographystyle{amsplain}
%\bibliographystyle{abbrv}
%\bibliography{references}
%\bibliography{/Users/Alavi/Dropbox/References/references.bib}
%\bibliography{D:/Dropbox/References/references}

%    Text of article.

%    Bibliographies can be prepared with BibTeX using amsplain,
%    amsalpha, or (for "historical" overviews) natbib style.
%\bibliographystyle{amsplain}
%    Insert the bibliography data here.

\end{document}